\title{A Note on Hamiltonian-Intersecting Families of Graphs}
\author{Imre Leader\thanks{Department of Pure Mathematics and Mathematical Statistics, Centre for Mathematical Sciences, University of Cambridge, Wilberforce Road, Cambridge CB3 0WB, United Kingdom. Email: \texttt{i.leader@dpmms.cam.ac.uk}.} \and  \v{Z}arko Ran\dj elovi\'c\thanks{Department of Pure Mathematics and Mathematical Statistics, Centre for Mathematical Sciences, University of Cambridge, Wilberforce Road, Cambridge CB3 0WB, United Kingdom. Email: \texttt{zr233@cam.ac.uk}.} \and Ta Sheng Tan\thanks{Institute of Mathematical Sciences, Faculty of Science, Universiti Malaya, 50603 Kuala Lumpur, Malaysia. Email: \texttt{tstan@um.edu.my}.}}
\newtheorem{thm}{Theorem}
\newtheorem{conjecture}[thm]{Conjecture}
\theoremstyle{remark}  
\theoremstyle{definition}
\begin{document}

\maketitle
\begin{abstract}
 How many graphs on an $n$-point set can we find such that any two have connected intersection?
 Berger, Berkowitz, Devlin, Doppelt, Durham, Murthy and Vemuri showed that the maximum is exactly $1/2^{n-1}$ of all graphs.
 Our aim in this short note is to give a `directed' version of this result;
 we show that a family of oriented graphs such that any two have strongly-connected intersection has size at most $1/3^n$ of all oriented graphs.
 We also show that a family of graphs such that any two have Hamiltonian intersection has size at most $1/2^n$ of all graphs, verifying a conjecture of the above authors.
\end{abstract}


\section{Introduction}
For a graph property $\mathcal{P}$, we say that a family $\mathcal{F}$ of graphs on a common vertex set $[n]=\{1,2,\ldots,n\}$ is \emph{$\mathcal{P}$-intersecting} if $G\cap H$ satisfies $\mathcal{P}$ for every $G$ and $H$ in $\mathcal{F}$.
The largest size of a $\mathcal{P}$-intersecting family has been investigated for many properties $\mathcal{P}$ by several authors: see the survey of Ellis~\cite{ellis} for many examples.

For the property of connectedness (meaning being a connected spanning subgraph of $[n]$), one obvious family is the family of all graphs containing a fixed spanning tree $T$.
This family has size $2^{\binom{n}{2}-(n-1)}$, or equivalently, it has size $1/2^{n-1}$ of all graphs.
Berger, Berkowitz, Devlin, Doppelt, Durham, Murthy and Vemuri~\cite{berger1} showed that this is in fact the maximum size.

For completeness, we give their (very short and elegant) proof.
Viewing the space of graphs as an $\binom{n}{2}$-dimensional vector space over $\mathbb{Z}_2$, consider the linear span $S$ of the stars $S_1, S_2, \ldots, S_{n-1}$ centred at $1,2,\ldots, n-1$, respectively.
Then $S$ has dimension $n-1$ (as those stars are linearly independent).
Also, every non-empty sum of the $S_i$ is an edge cut of the complete graph, and so has disconnected complement.
It follows that if $\mathcal{F}$ is a connected-intersecting family of graphs, then $\mathcal{F}$ can meet each translate of $S$ in at most one point, since for any graph $G$ and any non-zero $H\in S$, the graphs $G$ and $G+H$ have intersection containing no edge of $H$.
This completes the proof.

What if we ask that any two graphs from our family have intersection that contain a Hamilton cycle?
The authors of \cite{berger1} conjectured that the greatest size of a Hamiltonian-intersecting family is attained by the family of all graphs containing a fixed Hamilton cycle.
In this short note, we point out that (perhaps surprisingly) this may in fact be deduced in a direct way from the connected-intersecting result itself.

Interestingly, our proof also shows uniqueness: the only Hamiltonian-intersecting families of size $1/2^n$ of all graphs are the families of the above form.
This is in contrast with the connectedness case, where even for Hamilton-path-intersecting we do not have uniqueness.
Indeed, as noted in \cite{berger1}, if we consider the family $\mathcal{F}$ consisting of all graphs that contain the path $34\cdots n$, and the edge $12$, and at least two of the edges $13, 23$ and $1n$, we see that $\mathcal{F}$ is Hamilton-path-intersecting and has size $1/2^{n-1}$ of all graphs.

What happens for directed graphs?
Here the natural analogue of connectedness is strong connectedness: for any $x$ and $y$, there is a directed path from $x$ to $y$.
For \emph{directed} graphs (meaning we allow both $\overrightarrow{xy}$ and
$\overrightarrow{yx}$), the natural guess is that the maximum size is $1/2^n$ of all directed graphs, obtained by taking all directed graphs that contain a fixed directed $n$-cycle.
This is actually not so interesting: it turns out to follow from linear algebra arguments exactly as above.
The more interesting question is for \emph{oriented} graphs (meaning we cannot have both $\overrightarrow{xy}$ and $\overrightarrow{yx}$).
Here the same construction has size $1/3^n$ of all oriented graphs.
We prove that this is indeed best.

Returning to undirected graphs, we mention that there are several related problems that we are unable to solve.
We make some conjectures at the end of this note.

\section{Proofs of Results}

We start with the simplest result, for directed graphs.

\begin{thm}\label{directed}
 Let $\mathcal{F}$ be a family of directed graphs on $[n]$ such that the intersection of any two graphs in $\mathcal{F}$ is strongly connected.
 Then $|\mathcal{F}|\le \frac{1}{2^n}\cdot 4^{\binom{n}{2}}$.
\end{thm}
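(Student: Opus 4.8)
The plan is to run the same $\mathbb{Z}_2$ linear-algebra argument that proves the connectedness result, but with a subspace tailored to strong connectivity. I view the space of directed graphs on $[n]$ as the vector space $\mathbb{Z}_2^{n(n-1)}$, with one coordinate for each of the $n(n-1)$ possible arcs; the total number of directed graphs is then $2^{n(n-1)} = 4^{\binom{n}{2}}$. Exactly as in the undirected proof, the key is to produce a subspace $S$ with the property that for every nonzero $H\in S$ and every directed graph $G$, the intersection $G\cap(G+H)=G\setminus H$ fails to be strongly connected; it then follows that $\mathcal{F}$ meets each coset of $S$ in at most one point, so $|\mathcal{F}|\le 2^{n(n-1)}/|S|$. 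To obtain the claimed bound I need $\dim S=n$.

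The candidate I would use is the span of the out-stars $O_1,\dots,O_n$, where $O_i$ is the set of all arcs $\overrightarrow{ij}$ leaving vertex $i$. The crucial difference from the undirected case is that these have pairwise disjoint support: the arc $\overrightarrow{ij}$ lies in $O_i$ and in no other $O_k$. Hence the $O_i$ are automatically linearly independent and $\dim S=n$. This is precisely where the directed problem gains a dimension over the undirected one, in which the stars share edges and $\sum_i S_i=0$. Moreover the nonzero elements of $S$ are exactly the sums $\sum_{i\in T}O_i$ over nonempty $T\subseteq[n]$.

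It then remains to check that each such sum destroys strong connectivity when removed. For a nonempty proper $T$, the sum $\sum_{i\in T}O_i$ is the set of all arcs with tail in $T$ and in particular contains every arc from $T$ to $[n]\setminus T$; so $G\setminus H$ has no arc leaving $T$ and cannot be strongly connected. For $T=[n]$ the sum is the set of all arcs, so $G\setminus H$ is empty, which is not strongly connected for $n\ge 2$. This boundary case is the one small point to address, but it is immediate.

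Finally I would assemble the pieces just as in the connectedness proof: since any two distinct members of $\mathcal{F}$ lying in a common coset of $S$ differ by a nonzero $H\in S$ and hence have non-strongly-connected intersection, $\mathcal{F}$ meets each coset in at most one point, giving $|\mathcal{F}|\le 2^{n(n-1)}/2^n = 4^{\binom{n}{2}}/2^n$, as required. The only genuine idea is the choice of subspace, so I expect the main (and rather minor) obstacle to be simply recognising that the out-stars, rather than some symmetric analogue of the undirected stars, are the right objects, and that it is their disjoint support that upgrades the exponent from $n-1$ to $n$.
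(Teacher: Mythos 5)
Your proposal is correct and is essentially identical to the paper's proof: the same $\mathbb{Z}_2$ coset argument with the span of the $n$ disjointly supported out-stars. The only cosmetic difference is that the paper avoids your separate $T=[n]$ case by noting that any nonzero sum involving $O_i$ already contains the full directed cut out of the singleton $\{i\}$.
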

\begin{proof}
 We consider the space of all directed graphs as a vector space of dimension $2\binom{n}{2}$ over $\mathbb{Z}_2$.
 For $1\le i\le n$, let $S_i$ be the out-star centred at $i$: $S_i$ = $\{\overrightarrow{ij}:j\ne i\}$.
 Then the $S_i$ are linearly independent (as they are disjointly supported), so their span $S$ has dimension $n$.
 To show that $\mathcal{F}$ meets each translate of $S$ in at most one point, it suffices to show that every non-zero graph in $S$ contains all directed edges from $A$ to $A^C$ for some $A\subset [n]$ ($A\ne \emptyset, [n]$), because every strongly connected graph must contain such a directed edge.
 Indeed, if both of the graphs $G$ and $G+H$ belonged to $\mathcal{F}$ for some non-zero $H\in S$, then $G\cap (G+H)$ contains no directed edge in $H$, contradicting the fact that $G\cap(G+H)$ is strongly connected.

 But a non-zero sum of the generators of $S$, say involving $S_i$, already must contain this cut for $A=\{i\}$.
 We conclude that the size of $\mathcal{F}$ is at most $1/2^n$ of the total number of directed graphs on $[n]$.
\end{proof}

We now move on to the case of oriented graphs.
Here, we cannot see a linear algebra approach.
Of course, we may view the space of all oriented graphs as an $\binom{n}{2}$-dimensional vector space over $\mathbb{Z}_3$ in the obvious way (with $\overrightarrow{xy}+\overrightarrow{yx}=0$ and $\overrightarrow{xy}+\overrightarrow{xy}=\overrightarrow{yx}$).
But there does not seem to be an $n$-dimensional subspace $S$ such that every non-zero graph in $S$ contains a directed cut as above.

Instead, we will use projection (entropy), using Shearer's lemma~\cite{chung} or the Uniform Covers theorem of Bollob\'as and Thomason~\cite{bollobas} by considering projections onto stars.

\begin{thm}\label{oriented}
 Let $\mathcal{F}$ be a family of oriented graphs on $[n]$ such that the intersection of any two graphs in $\mathcal{F}$ is strongly connected.
 Then $|\mathcal{F}|\le \frac{1}{3^n}\cdot 3^{\binom{n}{2}}$.
\end{thm}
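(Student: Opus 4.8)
The plan is to bound $|\mathcal{F}|$ by projecting onto the $n$ stars and then bounding each projection separately. Taking the ground set to be the $\binom{n}{2}$ unordered pairs, the star $S_i=\{\{i,j\}:j\neq i\}$ at vertex $i$ covers each pair exactly twice (the pair $\{i,j\}$ lies in $S_i$ and in $S_j$), so the $n$ stars form a $2$-uniform cover. Applying the Bollob\'as--Thomason uniform cover theorem (equivalently Shearer's lemma) to $\mathcal{F}$ then gives $|\mathcal{F}|^2\le\prod_{i=1}^n|\mathcal{F}_i|$, where $\mathcal{F}_i$ is the projection of $\mathcal{F}$ onto the edges incident to $i$. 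Since $\binom{n}{2}-n=\tfrac12(n^2-3n)$, it suffices to prove the per-star bound $|\mathcal{F}_i|\le 3^{n-3}$ for every $i$; multiplying these yields $|\mathcal{F}|^2\le 3^{n(n-3)}=3^{2\binom{n}{2}-2n}$, which is exactly what is wanted.

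Next I would read off the local structure at a fixed vertex $i$. Recording, for each neighbour $j$, whether the edge $\{i,j\}$ is out ($\overrightarrow{ij}$), in ($\overrightarrow{ji}$) or absent identifies $\mathcal{F}_i$ with a subset of $\{+,0,-\}^{n-1}$. For two graphs $G,H\in\mathcal{F}$, strong connectivity of $G\cap H$ forces $i$ to have both an out-edge and an in-edge in $G\cap H$, and an out-edge $\overrightarrow{ij}$ survives in $G\cap H$ precisely when $j$ is an out-neighbour of $i$ in both $G$ and $H$. Hence the out-neighbourhoods $P(a)=\{j:a_j=+\}$ (over $a\in\mathcal{F}_i$) form an intersecting family $\mathcal{P}$, and similarly the in-neighbourhoods $N(a)=\{j:a_j=-\}$ form an intersecting family $\mathcal{N}$. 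Since each $a$ is determined by the disjoint pair $(P(a),N(a))$, if $c$ is drawn uniformly from $\{+,0,-\}^{n-1}$ then $|\mathcal{F}_i|\le 3^{n-1}\,\mathbb{P}\big[P_c\in\mathcal{P}\text{ and }N_c\in\mathcal{N}\big]$, so it is enough to show this probability is at most $1/9$.

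This last inequality is the heart of the matter, and is where I expect the main difficulty. The obvious estimates are too weak: bounding the in-sets only within a fixed out-set, or partitioning $\{+,0,-\}^{n-1}$ into the orbits of the diagonal $S_3$-action (each of which can contain at most one local view, since any nontrivial relabelling destroys a common $+$ or a common $-$), both give only $|\mathcal{F}_i|\lesssim 3^{n-1}/6$, a factor $1/6$ rather than $1/9$, which does not suffice to beat $1/3^n$. To gain the sharp factor one must use the two intersecting conditions \emph{globally and simultaneously}. I would order $\{+,0,-\}$ coordinatewise by $-<0<+$; passing to the up-closures of $\mathcal{P}$ and $\mathcal{N}$ (which preserves the intersecting property and only enlarges the event) makes $\{P_c\in\mathcal{P}\}$ an increasing event and $\{N_c\in\mathcal{N}\}$ a decreasing event for the uniform product measure. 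By the Harris--FKG inequality an increasing and a decreasing event are negatively correlated, so the probability is bounded above by the product $\mathbb{P}[P_c\in\mathcal{P}]\cdot\mathbb{P}[N_c\in\mathcal{N}]$. Finally each factor is the $p$-biased measure of an intersecting family with $p=1/3<\tfrac12$, hence at most $p=1/3$ by the standard sharp bound for intersecting families (see the survey \cite{ellis}); the product is $1/9$, which completes the per-star estimate and hence the theorem.
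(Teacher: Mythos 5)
Your proposal is correct, and it follows the paper's overall skeleton --- encode oriented graphs as points of $\{0,1,2\}^{\binom{n}{2}}$, project onto the $n$ stars, note that these form a $2$-uniform cover, and apply the Bollob\'as--Thomason/Shearer inequality so that everything reduces to the per-star bound $|\mathcal{F}_i|\le 3^{n-3}$ --- but you establish that per-star bound by a genuinely different argument. The paper records only that any two projections agree in at least two coordinates and then invokes the Frankl--Tokushige theorem for $2$-agreeing families in $\{0,1,2\}^{n-1}$; it even remarks that the agreement occurs on non-zero coordinates ``but we will not use this fact''. You use precisely that discarded information: the out-neighbourhoods and in-neighbourhoods of $i$ form two intersecting set families, and after passing to up-closures (which indeed preserves the intersecting property and only enlarges the relevant events) you bound the probability that a uniform point of $\{+,0,-\}^{n-1}$ lands in both by Harris/FKG (an increasing and a decreasing event are negatively correlated) together with the folklore bound $\mu_p(\mathcal{A})\le p$ for an intersecting family at $p=1/3\le\tfrac12$. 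The arithmetic checks out ($2\binom{n}{2}-2n=n(n-3)$), and your observation that cruder counts --- fixing the out-set, or the diagonal $S_3$-orbit partition --- only yield a factor $1/6$ correctly pinpoints why a correlation inequality is genuinely needed to reach $1/9$. The trade-off: the paper's route is shorter given the reference and plugs directly into the general $t$-agreement theorem of Frankl and Tokushige, while yours is self-contained modulo two standard lemmas and shows that the full strength of their theorem is not required here; both yield the sharp constant $3^{n-3}$ and hence the bound $3^{\binom{n}{2}-n}$.
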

\begin{proof}
 We view an oriented graph $G$ as a point $\Tilde{G}$ in $\{0,1,2\}^{\binom{n}{2}}$ (where the coordinates are indexed by edges of the complete graph), by giving the $xy$-coordinate value $0$ if the edge $xy$ is not present in either direction, and value $1$ if it is present in a fixed reference direction (e.g., $\overrightarrow{xy}$ where $x<y$), and value $2$ if it is present in the other direction. 
 Our family $\mathcal{F}$ thus corresponds to a set $\Tilde{\mathcal{F}}\subset \{0,1,2\}^{\binom{n}{2}}$.

 For each $i$, consider the projection $\pi$ onto the $n-1$ coordinates involving $i$.
 For any graphs $G$ and $H$ in $\mathcal{F}$, $G\cap H$ contains at least one edge out of $i$ and at least one edge into $i$.
 Hence $\pi(\Tilde{G})$ and $\pi(\Tilde{H})$ agree on at least two coordinates. (In fact, they are non-zero on these coordinates, but we will not use this fact.)

 So, how many points in the set $\{0,1,2\}^{n-1}$ can we find such that any two agree on at least two coordinates?
 This question was answered by Frankl and Tokushige~\cite{frankl}, who showed that this maximum is $3^{n-3}$. (See Section 3 of their paper for a particularly attractive proof of this result; they also found the complete answer for more general alphabet size and for agreement in $t$ coordinates.)
 It follows that $\pi(\Tilde{\mathcal{F}})$ has size at most $3^{n-3}$, which is $3^{(n-1)\alpha}$ where $\alpha = 1-\frac{2}{n-1}$.

 Now, the stars centred at $i$, for $1\le i\le n$, form a `uniform cover' of the complete graph, in the sense that each edge of the complete graph belongs to the same number of stars.
 Hence, by Shearer's lemma or the Uniform Covers theorem, it follows that the size of $\Tilde{\mathcal{F}}\subset \{0,1,2\}^{\binom{n}{2}}$ is at most $3^{\binom{n}{2}\alpha}$.
 And this is exactly $3^{\binom{n}{2}-n}$, as required.
\end{proof}

We remark that equality occurs in Theorem~\ref{oriented} if and only if $\mathcal{F}$ consists of all oriented graphs that contain a fixed directed Hamilton cycle.
Indeed, by the case of equality in the Uniform Covers theorem (see \cite{bollobas}), for equality to hold in Theorem~\ref{oriented} we must have that $\Tilde{\mathcal{F}}$ is a cuboid, that is, a product of $1$-dimensional sets.
This cuboid must have $n$ side-lengths of $1$, and the remaining $\binom{n}{2}-n$ side-lengths of $3$. 
This corresponds to $\mathcal{F}$ consisting of all oriented graphs that contain some fixed $n$-edge oriented graph that is strongly connected, as required.

We now turn to undirected graphs, and the question of Hamiltonian-intersecting families.
One could try to project onto stars, as for the oriented case.
On each star, our family will be $2$-intersecting.
However, a $2$-intersecting family of sets can have size about $1/2$ of $2^n$, by taking for example the family of all sets of size at least $\frac{n}{2}+1$ -- in complete contrast to the case of $2$-intersecting families in $\{0,1,2\}^n$, as discussed above.
So the bound we get would be incredibly weak.

Remarkably, though, if we instead project onto complete graphs, then the bounds work out perfectly.

\begin{thm}\label{hamiltonian}
 Let $\mathcal{F}$ be a family of graphs on $[n]$ such that the intersection of any two graphs in $\mathcal{F}$ is Hamiltonian.
 Then $|\mathcal{F}|\le \frac{1}{2^n}\cdot 2^{\binom{n}{2}}$.
\end{thm}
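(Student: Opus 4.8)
The plan is to deduce this directly from the connected-intersecting theorem of Berger et al.\ by deleting vertices. The key observation is that if $G\cap H$ is Hamiltonian on $[n]$, then it contains a Hamilton cycle, and deleting any vertex $i$ from this cycle leaves a Hamilton path on the remaining $n-1$ vertices. In other words, the restriction of $G\cap H$ to the induced complete graph on $[n]\setminus\{i\}$ is a connected spanning subgraph. So, where the oriented case projected onto stars, here I would project onto complete subgraphs: this is exactly the move flagged before the statement, and the point is that it converts the Hamiltonicity of the intersection straight into connectedness of the restriction.

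Concretely, for each $i\in[n]$ let $\pi_i$ denote the projection of a graph onto the edges lying inside $[n]\setminus\{i\}$, that is, its restriction to that induced complete graph. Since $\pi_i(G)\cap\pi_i(H)=\pi_i(G\cap H)$, and the latter contains a Hamilton path on $[n]\setminus\{i\}$ whenever $G,H\in\mathcal{F}$, the projected family $\pi_i(\mathcal{F})$ is a connected-intersecting family on the $n-1$ vertices of $[n]\setminus\{i\}$. Applying the connected-intersecting result on $n-1$ points then gives $|\pi_i(\mathcal{F})|\le \frac{1}{2^{n-2}}\cdot 2^{\binom{n-1}{2}}=2^{\binom{n-1}{2}-n+2}$ for every $i$.

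Next I would combine these $n$ projection bounds. The $n$ complete graphs on the vertex sets $[n]\setminus\{i\}$ form a uniform cover of the edge set of $K_n$: an edge $\{a,b\}$ lies in exactly the $n-2$ of them for which $i\notin\{a,b\}$. Hence Shearer's lemma (equivalently the Uniform Covers theorem), just as in the oriented case, yields $|\mathcal{F}|^{\,n-2}\le \prod_{i=1}^{n}|\pi_i(\mathcal{F})|$. Feeding in the per-projection bound gives a right-hand side of $\bigl(2^{\binom{n-1}{2}-n+2}\bigr)^{n}=2^{\,n(n-2)(n-3)/2}$, and taking $(n-2)$-th roots collapses this to $|\mathcal{F}|\le 2^{\,n(n-3)/2}=2^{\binom{n}{2}-n}$, exactly as claimed.

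The conceptual heart of the argument is the decision to project onto complete subgraphs rather than stars; granting that, the rest is essentially bookkeeping—verifying that each $\pi_i(\mathcal{F})$ really is connected-intersecting, that the cover multiplicity is $n-2$, and that the exponents close. I expect the only delicate point to be that final exponent calculation, which as above works out perfectly. With a little more care one should also be able to read off the uniqueness claimed in the introduction, by feeding the equality case of the Uniform Covers theorem (forcing $\mathcal{F}$ to be a cuboid) into the structure of the extremal connected-intersecting families, though I would treat that as a separate step beyond the bound itself.
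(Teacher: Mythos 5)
Your proof is correct and takes essentially the same approach as the paper: project onto the complete graphs on $[n]\setminus\{i\}$, note each projected family is connected-intersecting and apply the Berger et al.\ bound on $n-1$ vertices, then combine via Shearer's lemma using the uniform cover of multiplicity $n-2$. The only difference is cosmetic — you use the product form $|\mathcal{F}|^{n-2}\le\prod_i|\pi_i(\mathcal{F})|$ where the paper phrases the same step via the exponent $\alpha=1-\tfrac{2}{n-1}$ in the Uniform Covers theorem — and your exponent arithmetic checks out.
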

\begin{proof}
 We view a graph $G$ as a point $\Tilde{G}$ in $\{0,1\}^{\binom{n}{2}}$ in the obvious way.
 Our family $\mathcal{F}$ thus corresponds to a set $\Tilde{\mathcal{F}}\subset \{0,1\}^{\binom{n}{2}}$.

 For each $i$, consider the projection $\pi$ onto the $\binom{n-1}{2}$ coordinates not involving $i$.
 For any graphs $G$ and $H$ in $\mathcal{F}$, $G\cap H$ contains a Hamilton path on $[n]\setminus\{i\}$, and in particular, the induced subgraph of $G\cap H$ on $[n]\setminus\{i\}$ is connected.
 Hence $\pi(\Tilde{\mathcal{F}})$ is connected-intersecting on $[n]\setminus\{i\}$.
 It follows from the connected-intersecting result in \cite{berger1} that the size of $\pi(\Tilde{\mathcal{F}})$ is at most $2^{\binom{n-1}{2}-(n-2)}$, which is $2^{\binom{n-1}{2}\alpha}$ where $\alpha=1-\frac{2}{n-1}$.

 Now, the $n$ complete graphs on $n-1$ vertices form a uniform cover of the complete graph on $n$ vertices.
 Hence as before it follows that the size of $\Tilde{\mathcal{F}}$ is at most $2^{\binom{n}{2}\alpha}=2^{\binom{n}{2}-n}$.
\end{proof}

We remark that, as before, from the equality case in the Uniform Covers theorem it follows that equality holds in Theorem~\ref{hamiltonian} if and only if $\mathcal{F}$ consists of all graphs that contain a fixed Hamilton cycle.

\section{Open Problems}

Theorem~\ref{hamiltonian} is stated for Hamiltonian-intersecting families, but in fact all that the proof uses about a Hamiltonian graph is that if we delete a vertex then the graph stays connected.
So the same proof would give the following.

\begin{thm}
 Let $\mathcal{F}$ be a family of graphs on $[n]$ such that the intersection of any two graphs in $\mathcal{F}$ has no cutvertex.
 Then $|\mathcal{F}|\le \frac{1}{2^n}\cdot 2^{\binom{n}{2}}$.
 \qed
\end{thm}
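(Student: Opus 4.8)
The plan is to observe that this is not really a new theorem at all: it is exactly Theorem~\ref{hamiltonian} with the Hamiltonicity of $G\cap H$ replaced by the weaker hypothesis that $G\cap H$ has no cutvertex, and the proof of that theorem goes through unchanged. The only place Hamiltonicity was used was to deduce that, after deleting any single vertex $i$, the intersection $G\cap H$ remains connected on $[n]\setminus\{i\}$; so I would begin by isolating precisely this consequence and checking that the new hypothesis still supplies it. To that end I would first record the elementary fact that a (connected) graph with no cutvertex stays connected upon deletion of any one vertex — that is, $(G\cap H)-i$ is connected for every $i$ — which is exactly the $2$-connectedness that the argument consumes. As before, I would identify a graph $G$ on $[n]$ with the point $\widetilde{G}\in\{0,1\}^{\binom{n}{2}}$ and the family $\mathcal{F}$ with a set $\widetilde{\mathcal{F}}\subseteq\{0,1\}^{\binom{n}{2}}$.

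The key steps then mirror the proof of Theorem~\ref{hamiltonian} verbatim. For each $i$, let $\pi$ be the projection onto the $\binom{n-1}{2}$ coordinates indexed by the edges of the complete graph on $[n]\setminus\{i\}$. For any $G,H\in\mathcal{F}$, the intersection $G\cap H$ has no cutvertex, so its induced subgraph on $[n]\setminus\{i\}$ is a connected spanning subgraph of the complete graph on $[n]\setminus\{i\}$; hence $\pi(\widetilde{\mathcal{F}})$ is connected-intersecting on the vertex set $[n]\setminus\{i\}$. By the connected-intersecting result of \cite{berger1}, $|\pi(\widetilde{\mathcal{F}})|\le 2^{\binom{n-1}{2}-(n-2)}=2^{\binom{n-1}{2}\alpha}$ with $\alpha=1-\tfrac{2}{n-1}$. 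Finally, the $n$ complete graphs on $n-1$ vertices form a uniform cover of the complete graph on $[n]$ (each edge lying in exactly $n-2$ of them), so Shearer's lemma~\cite{chung} or the Uniform Covers theorem~\cite{bollobas} upgrades these projection bounds to $|\widetilde{\mathcal{F}}|\le 2^{\binom{n}{2}\alpha}=2^{\binom{n}{2}-n}$, as required.

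I do not expect any genuine obstacle here: the entire substance lies in the single observation that the earlier proof never used anything about $G\cap H$ beyond its remaining connected after the deletion of one vertex. The one point I would be careful about is the precise meaning of ``no cutvertex''. Since a disconnected graph (for instance the complete graph on $n-1$ vertices together with an isolated vertex, or a disjoint union of triangles) can vacuously have no cutvertex while failing to be connected after a deletion, the hypothesis has to be read as ``$G\cap H$ is connected and has no cutvertex'', i.e.\ is $2$-connected; it is this that makes $(G\cap H)-i$ connected for every $i$. A Hamiltonian graph of course satisfies it, since a Hamilton cycle with one vertex removed is a Hamilton path. With that reading in place the argument is word-for-word identical to Theorem~\ref{hamiltonian}, and the same equality analysis (a forced cuboid whose fixed $n$-edge graph must itself be $2$-connected, hence a Hamilton cycle) would again pin down the extremal families.
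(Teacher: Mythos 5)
Your proposal is correct and takes essentially the same approach as the paper, which proves this theorem by the identical observation: the proof of Theorem~\ref{hamiltonian} only ever uses that $G\cap H$ remains connected after deleting any single vertex, so it applies verbatim under the weaker hypothesis. Your additional care in reading ``no cutvertex'' as ``connected with no cutvertex'' is a sensible clarification but does not alter the argument.
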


However, what if we only know that the intersections are bridgeless?
It ought to be the case that the best is again to take all graphs that contain a fixed $n$-cycle, but we have been unable to prove this.

\begin{conjecture}
 Let $\mathcal{F}$ be a family of graphs on $[n]$ such that the intersection of any two graphs in $\mathcal{F}$ is $2$-edge-connected (i.e. connected and bridgeless).
 Then $|\mathcal{F}|\le \frac{1}{2^n}\cdot 2^{\binom{n}{2}}$.
\end{conjecture}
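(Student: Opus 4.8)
The plan is to seek a direct analogue of the linear-algebra argument of Berger, Berkowitz, Devlin, Doppelt, Durham, Murthy and Vemuri. Working over $\mathbb{Z}_2$ in the $\binom{n}{2}$-dimensional space of graphs, it would suffice to exhibit an $n$-dimensional subspace $S$ with the property that for every non-zero $H\in S$ the graph $K_n\setminus\mathrm{supp}(H)$ fails to be $2$-edge-connected. Indeed, if $G$ and $G+H$ both lay in a $2$-edge-connected-intersecting family $\mathcal{F}$, then $G\cap(G+H)$ would contain no edge of $\mathrm{supp}(H)$ and so be a spanning subgraph of $K_n\setminus\mathrm{supp}(H)$; since adding edges preserves $2$-edge-connectivity, a spanning subgraph of a non-$2$-edge-connected graph is again non-$2$-edge-connected, a contradiction. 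Thus $\mathcal{F}$ would meet each translate of such an $S$ in at most one point, giving $|\mathcal{F}|\le 2^{\binom{n}{2}-n}$, and as in Theorem~\ref{hamiltonian} the equality analysis should pin down the extremal families.

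Before pursuing this, it is worth recording why the projection method of Theorems~\ref{oriented} and~\ref{hamiltonian} does \emph{not} seem to close the problem. Since a graph is $2$-edge-connected exactly when it is connected and stays connected after deletion of any single edge, the natural projections are onto $K_n-e$ for each edge $e$: for $G,H\in\mathcal{F}$ the graph $(G\cap H)-e$ is connected, so each image $\pi_e(\tilde{\mathcal{F}})$ is connected-intersecting on $K_n-e$ and hence, by the linear-algebra argument of \cite{berger1} applied with the cut space of the connected host $K_n-e$, has size at most $2^{\binom{n}{2}-n}$. The edge-complements $K_n-e$ form a uniform cover of multiplicity $\binom{n}{2}-1$, so Shearer's lemma~\cite{chung} gives
\[
|\tilde{\mathcal{F}}|^{\binom{n}{2}-1}\le \prod_{e}|\pi_e(\tilde{\mathcal{F}})|\le (2^{\binom{n}{2}-n})^{\binom{n}{2}}.
\]
This only yields a codimension of $n-\frac{\binom{n}{2}-n}{\binom{n}{2}-1}$, which tends to $n-1$: the very large multiplicity of the cover dilutes the per-projection saving, and we recover essentially nothing beyond the connectedness bound. (The Hamiltonian case succeeded precisely because the vertex-deletion cover had the small multiplicity $n-2$.) This is why a linear-algebraic, rather than entropic, approach appears to be needed.

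The heart of the matter is therefore the construction of the subspace $S$. The cut space already supplies an $(n-1)$-dimensional blocking subspace, since every non-zero cut has disconnected complement in $K_n$; the entire difficulty is to gain one further dimension. Equivalently, one seeks a binary code of length $\binom{n}{2}$ and dimension $n$ every non-zero word of which has $2$-edge-disconnected complement (one way to produce one is to find a coset of the cut space containing no $2$-edge-connected graph, which yields a blocking subspace containing the cut space, though this is not the only route). For small $n$ this can be done: when $n=4$, for instance, the subspace $\{H:\deg_H(1)\equiv 0,\ |E_H(\{2,3,4\})|\equiv 0 \pmod 2\}$ is four-dimensional, and one checks directly that every non-zero member has non-$2$-edge-connected complement, recovering the sharp bound. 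The main obstacle is to carry this out for all $n$. The `good' vectors -- those with non-$2$-edge-connected complement -- are essentially the vectors of large weight together with those concentrated at a single vertex, so a blocking subspace is forced to have large minimum weight; but Griesmer-type bounds forbid the naive high-distance constructions (already for $n=5$ there is no $[10,5,5]$ code, so any candidate subspace must contain a word of weight at most $4$, and each such low-weight word would then have to be coerced into being vertex-concentrated or cut-containing). It is quite possible that no such subspace exists for large $n$, in which case the conjecture would demand a genuinely new idea -- perhaps an inductive scheme, or an argument exploiting the fact that the images $\pi_e(\tilde{\mathcal{F}})$ cannot simultaneously attain the connected-intersecting extremum for every $e$ -- and it is exactly this gap that we have been unable to bridge.
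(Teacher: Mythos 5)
This statement is one of the paper's open conjectures --- the authors explicitly say they have been unable to prove it --- so there is no proof in the paper to compare yours against, and your proposal, by its own admission, does not prove it either. What you have written is a correct and well-calibrated analysis of why the paper's two techniques fail here, not a proof. The reduction you state is sound: an $n$-dimensional subspace $S$ of the $\mathbb{Z}_2$-space of graphs in which every non-zero element has non-$2$-edge-connected complement would give the bound, since $2$-edge-connectivity is monotone under adding edges, so $\mathcal{F}$ could meet each coset of $S$ at most once. Your Shearer computation for the cover by the graphs $K_n-e$ is also correct: the codimension obtained is $\binom{n}{2}(n-1)/\bigl(\binom{n}{2}-1\bigr)$, which equals your expression and exceeds $n-1$ by a quantity tending to $0$, so nothing useful is gained over the connectedness bound. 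Your $n=4$ subspace does work: a non-zero $H$ with $\deg_H(1)$ and $|E_H(\{2,3,4\})|$ both even has at least two edges, so $K_4\setminus H$ is $2$-edge-connected only if it is a $4$-cycle, forcing $H$ to be a perfect matching, which has $\deg_H(1)=1$, a contradiction.

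The gap is exactly where you place it: the construction of $S$ for general $n$, which is the entire content of the problem. Everything up to that point is a reduction of one open problem to another (find an $n$-dimensional `blocking' subspace), together with evidence that the target of the reduction may not even exist. A graph $H$ has non-$2$-edge-connected complement precisely when $H$ contains all, or all but one, of the edges of some cut $[A,A^c]$ with $\emptyset\ne A\ne[n]$; in particular every such $H$ has at least $n-2$ edges. As you observe, the non-existence of a $[10,5,5]$ binary code shows that for $n=5$ any candidate $S$ contains a word of weight at most $4$, which would then have to contain at least three of the four edges at a single vertex --- a constraint you do not verify can be met simultaneously with the subspace structure. So the proposal should be read as a research plan with an honest open step; nothing in it is wrong, but the conjecture remains unproved.
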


In the `other direction', what if we weaken connectedness to having a bounded number of components?
This question is also raised in \cite{berger1}.
For two components, the natural example would be to take all graphs that contain all but one of the edges of a fixed $n$-cycle.
We strongly believe that this is the best example.

\begin{conjecture}
 Let $\mathcal{F}$ be a family of graphs on $[n]$ such that the intersection of any two graphs in $\mathcal{F}$ has at most two components.
 Then $|\mathcal{F}|\le \frac{n+1}{2^n}\cdot 2^{\binom{n}{2}}$.
\end{conjecture}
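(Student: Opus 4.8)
The plan is to first pin down why the stated bound should be correct, and then to attempt the two strategies that succeeded for the connected and Hamiltonian cases, explaining where each breaks down. The extremal family is the set $\mathcal{F}$ of all graphs that contain at least $n-1$ of the edges of a fixed Hamilton cycle $C$. On the $n$ cycle edges there are $\binom{n}{n}+\binom{n}{n-1}=n+1$ admissible patterns, and the remaining $\binom{n}{2}-n$ edges are free, so $|\mathcal{F}|=(n+1)2^{\binom{n}{2}-n}$, matching the conjectured bound. This family is $2$-component-intersecting because for $G,H\in\mathcal{F}$ the intersection $G\cap H$ misses at most two edges of $C$, hence contains a spanning subgraph (a cycle with at most two edges deleted) that already has at most two components. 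So the content of the conjecture is the matching upper bound.

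The most natural attack mirrors the proof of Theorem~\ref{hamiltonian}: view $\mathcal{F}$ inside $\{0,1\}^{\binom{n}{2}}$, and for each $i$ project onto the $\binom{n-1}{2}$ coordinates not involving $i$. The $n$ complete graphs on $n-1$ vertices again form a uniform cover, so Shearer's lemma would yield the global bound as soon as each projection $\pi_i(\mathcal{F})$ on $[n]\setminus\{i\}$ satisfied a suitable local bound. Here, however, the argument stalls. In the Hamiltonian case we used that deleting a vertex from a graph with no cutvertex keeps it connected; but a graph with only two components can fall apart completely when a vertex is deleted. For instance, if $G\cap H$ is a star centred at $i$ then it is connected, yet its restriction to $[n]\setminus\{i\}$ has $n-1$ components. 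Thus $\pi_i(\mathcal{F})$ need carry no useful few-component-intersecting property, and the clean local-to-global reduction that works for Hamiltonicity collapses. This is the main obstacle: there appears to be no family of projections, forming a uniform cover, on which the ``at most two components'' condition localises.

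One could instead try the linear-algebraic coset method of the connected and directed cases, seeking an $n$-dimensional subspace $S\le\mathbb{Z}_2^{\binom{n}{2}}$ such that every $2$-component-intersecting family meets each coset of $S$ in at most $n+1$ points; summing over the $2^{\binom{n}{2}-n}$ cosets would then give exactly the desired bound. The difficulty is that whether $G\cap H$ has at most two components depends on the actual graph $G$, not merely on the difference $G+H\in S$: avoiding a single cut $\delta(A)$ leaves two parts, but two ``crossing'' cuts already force up to four, while other edges present in $G$ may glue parts back together. Consequently no subspace seems to yield a coset bound that is uniform across cosets, and the elegant counting of the earlier proofs does not transfer.

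A proof therefore seems to require a genuinely new ingredient. The most promising route, in my view, is to keep track not just of the number of components but of where they can sit: one can use that $G\cap H$ has at most two components if and only if some single edge $e=e(G,H)$ joins $G\cap H$ into a connected graph. For a fixed connector $e$, the map $G\mapsto G\cup\{e\}$ sends the relevant subfamily to a connected-intersecting family, losing only a factor of $2$, so the real task is to show that the pairwise connectors may be drawn from a set of size about $n$ (as they can in the extremal example, where they are the edges of $C$) and to assign graphs, rather than pairs, to connectors without overcounting. Carrying this out --- effectively a stability argument showing that a near-extremal $2$-component-intersecting family must align with a single cycle --- is where I expect the real work to lie.
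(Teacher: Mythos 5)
This statement is an open conjecture in the paper: the authors explicitly state they have been unable to prove it, and the only upper bound they record (via projections onto double stars) is the much weaker $|\mathcal{F}|\le 2^{-n/4}\cdot 2^{\binom{n}{2}}$. Your proposal does not close that gap. What you have done correctly is verify the lower bound: the family of all graphs containing at least $n-1$ edges of a fixed $n$-cycle has size $(n+1)2^{\binom{n}{2}-n}$ and is $2$-component-intersecting, exactly as the paper asserts when it calls this "the natural example.'' Your diagnosis of why the two successful techniques fail is also sound and consistent with the paper's own remarks: the uniform-cover projection onto the complete graphs $K_{[n]\setminus\{i\}}$ breaks because "at most two components'' does not localise under vertex deletion, and the coset method breaks because the component count of $G\cap H$ is not determined by the difference $G+H$. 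But a correct account of why known methods fail is not a proof, and your final paragraph is a research programme rather than an argument: the step "show that the pairwise connectors may be drawn from a set of size about $n$ and assign graphs, rather than pairs, to connectors without overcounting'' is precisely the missing content, and nothing in the proposal indicates how to carry it out.

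One concrete warning about the route you sketch: a stability statement of the form "a near-extremal family must align with a single cycle'' is false as stated, because the paper points out that the extremum is not unique --- replacing the $n$-cycle by an $(n-1)$-cycle with a pendant edge gives another family attaining the same bound. Any argument assigning graphs to connector edges must therefore accommodate several non-isomorphic extremal configurations, which rules out the most naive versions of the counting you propose. In short: the construction and the obstructions are right, but the theorem remains unproved, in your write-up as in the paper.
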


Note that, if correct, there are other extremal examples.
For example, we may replace the $n$-cycle by an $(n-1)$-cycle with a pendant edge.
Perhaps these are the only examples?
The only upper bound we have is based on projections onto double stars, and gives the very weak result that the size of $\mathcal{F}$ is at most $\frac{1}{2^{n/4}}$ of all graphs.

Interestingly, for more than two components, constructions based on a fixed Hamilton cycle no longer appear to be best.
For example, for three components we suspect that the best construction is as follows.
Let the graph $H$ consists of two half-sized cycles that meet at a point, and now take the family of all graphs that contain all of $H$ except for one edge of each cycle.
For clarity, we write this down precisely for $n$ odd.

\begin{conjecture}
 Let $\mathcal{F}$ be a family of graphs on $[n]$ ($n$ odd) such that the intersection of any two graphs in $\mathcal{F}$ has at most three components.
 Then $|\mathcal{F}|\le \frac{(n+3)^2}{8}\frac{1}{2^n}\cdot 2^{\binom{n}{2}}$.
\end{conjecture}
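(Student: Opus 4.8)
The plan is to attack the conjecture in two halves, the construction and the upper bound, and I expect only the first to be routine.

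First I would pin down the extremal family and check that its size matches the claimed constant. Write $c$ for a distinguished vertex and let $H$ be the union of two cycles, each on $(n+1)/2$ vertices, meeting only at $c$; since $n$ is odd this is a spanning subgraph of $[n]$ with exactly $n+1$ edges, $(n+1)/2$ in each cycle. Let $\mathcal F$ consist of all graphs containing every edge of $H$ except at most one edge from each of the two cycles. On the $n+1$ edges of $H$ there are $1+(n+1)/2=(n+3)/2$ admissible patterns per cycle (all present, or exactly one absent), and the remaining $\binom n2-(n+1)$ edges are free, so
$$|\mathcal F|=\Big(\frac{n+3}{2}\Big)^{2}\,2^{\binom n2-(n+1)}=\frac{(n+3)^2}{8}\cdot\frac{1}{2^{n}}\cdot 2^{\binom n2}.$$
To see $\mathcal F$ is $3$-component-intersecting, note that the intersection of two of its members contains $H$ with at most two edges deleted from each cycle; deleting at most two edges from a cycle leaves at most two arcs, so in the retained spanning subgraph the arcs through $c$ from the two cycles merge into a single component and each cycle contributes at most one further (floating) arc, giving at most $1+2=3$ components. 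Adding the free edges only merges components, so the bound is respected. This confirms the target constant; the content of the conjecture is the matching upper bound.

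For the upper bound the natural template is the one driving Theorem~\ref{hamiltonian} and Theorem~\ref{oriented}: choose a family of subgraphs forming a uniform cover of $K_n$, show that the relevant intersecting property descends to a tractable property on each projection with a sharp bound, and combine via Shearer's lemma or the Uniform Covers theorem. The difficulty is that the single successful projection of this note -- deleting one vertex and projecting onto the resulting $K_{n-1}$ -- is useless here: having at most three components is not preserved under vertex deletion (deleting the centre of a star turns one component into $n-1$), so the projected families carry no usable local constraint. The other available projections also fail to be sharp: on a star a $3$-component-intersecting family need not even agree anywhere, since a vertex may be isolated in the intersection, while the double-star projection used for the two-component problem is already known to lose a constant-fraction-of-the-exponent amount.

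Consequently the real work -- and the step I expect to be the main obstacle -- is to find the correct projection together with a sharp base-case theorem. The most promising route I see is an induction on the number of permitted components, reducing ``$\le 3$ components on $[n]$'' to ``$\le 2$ components on $[n]\setminus\{i\}$'' for a well-chosen deleted vertex; the extremal $H$ behaves perfectly under deletion of its centre (it splits into two paths), which suggests first applying a compression or shifting step to push $\mathcal F$ towards this two-lobe structure before any vertex is removed. This would, however, require two ingredients we do not currently have: a sharp two-component base result (itself one of the open conjectures above), and a compression that both preserves the $3$-component-intersecting property and makes component counts behave monotonically under the chosen deletion. It therefore seems likely that a genuinely new idea, beyond the three techniques of this note -- the linear-algebra argument, the Frankl--Tokushige agreement bound, and vertex-deletion combined with uniform covers -- will be needed to settle the conjecture.
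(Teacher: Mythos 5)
This statement is one of the paper's open conjectures; the paper offers no proof of it, only the construction, so there is nothing to compare your argument against on the upper-bound side. What you have done correctly and completely is the easy half: your description of the extremal family (two cycles of length $(n+1)/2$ sharing a vertex, with at most one edge of each cycle allowed to be absent) matches the paper's, your count $\bigl(\tfrac{n+3}{2}\bigr)^2 2^{\binom{n}{2}-(n+1)} = \tfrac{(n+3)^2}{8}\cdot 2^{\binom{n}{2}-n}$ is right, and your verification that intersections of two members have at most three components (the arcs through the shared vertex merge, and each cycle contributes at most one floating arc) is sound. This confirms the constant in the conjecture is the correct target, but it is a lower bound, not the claimed upper bound.

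The genuine gap is therefore the entire upper bound, and you are candid about this. Your diagnosis of why the paper's three techniques fail is accurate: vertex deletion does not control component counts, star projections carry no agreement constraint when a vertex can be isolated in an intersection, and the paper itself concedes that double-star projections only give the very weak $2^{-n/4}$ fraction (and that only for the two-component version). Your proposed reduction from ``at most three components on $[n]$'' to ``at most two components on $[n]\setminus\{i\}$'' has an additional difficulty you do not flag: deleting a vertex can \emph{increase} the number of components of the intersection (the deleted vertex may be a cutvertex joining several lobes), so even for the extremal family the implication only works for the specific shared vertex $c$, which the argument has no way of locating in an arbitrary family without some prior structural information. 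So the proposal should be read as a correct verification of the construction together with a reasonable survey of obstacles, not as a proof; the conjecture remains open.
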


For intersections having at most $k$ components, it may be that the best construction (for $n$ sufficiently large) is to set $H$ to be $k-1$ cycles, of lengths as equal as possible, all meeting at one point, and then to take the family of all graphs that contain all but one edge of each cycle of $H$.
For example, for $k=4$, this gives a family whose size is $cn^3/2^n$ of all graphs, whereas the family of all graphs that contain all but two edges of a fixed $n$-cycle has size only $cn^2/2^n$ of all graphs.

\end{document}